\newtheorem{thm}{Theorem}[section]
\newtheorem{cor}[thm]{Corollary}
\newtheorem{lem}[thm]{Lemma}
\newtheorem{prop}[thm]{Proposition}
\newtheorem{rem}[thm]{Remark}
\newtheorem{ex}[thm]{Example}
\numberwithin{equation}{section}
\newcommand{\F}{\mathbb{F}}
\renewcommand{\P}{\mathbb{P}}
\newcommand{\m}{\mathcal}
\begin{document}


%
%

\title{NEW EXAMPLES OF ASYMPTOTICALLY GOOD KUMMER TYPE TOWERS}

\author{MAR\'IA CHARA}

\address{ Instituto de Matemática Aplicada del Litoral (UNL-CONICET)\\ G\"{u}emes 3450 S3000GLN, Santa Fe, ARGENTINA \\}
\email{mchara@santafe-conicet.gov.ar}

\author{RICARDO TOLEDANO}

\address{Departamento de Matem\'atica, Facultad de Ingeniería Qu\'imica (UNL)\\
Santiago del Estero 2829, S3000AOM, Santa Fe, ARGENTINA\\}
\email{toledano@santafe-conicet.gov.ar}

\maketitle

\begin{abstract}
In this work, we give sufficient conditions in order to have finite ramification locus in sequences of function fields defined by different kind of Kummer extensions. These conditions can be easily implemented in a computer to generate several examples. We present some new examples of asymptotically good towers of Kummer type and we show that many known examples can be obtained from our general results.
\end{abstract}

\noindent \keywords{\footnotesize Keywords: Function Fields; Kummer extension; Towers.}\\
\subjclass{\footnotesize 2010 Mathematics Subject Classification: 11G, 11R, 14H05}

\section{Introduction}

Asymptotically good towers of function fields have received much attention in theoretical considerations related to coding theory and Cryptography after the work of Tsfasman, Vladut and Zink in \cite{TVZ82}. They showed the existence of linear codes with parameters improving the so-called Gilbert-Varshamov bound using asymptotically good towers of modular curves (in fact, optimal) and a construction of linear codes due to Goppa. However, they did not give a method for constructing them.
This motivated the search for asymptotically good towers of function fields over finite fields defined in an explicit way.
  It turns out that it is a non-trivial problem to provide examples of such towers. This line of research was initiated by Garcia and Stichtenoth. They established all the fundamentals results of the theory of asymptotically good towers, (see, for example \cite{GS07}), and made one of their most important contributions with the study of the so-called recursive towers (see Section~\ref{section2} for details).

The aim of this paper is to continue the investigation (initiated in \cite{Chara2011}) of the asymptotic behavior of towers of function fields defined by a Kummer equation of the form \begin{equation}\label{ecuintro}
  y^m=\frac{x^m-\alpha f(x)+\alpha}{f(x)},
\end{equation} where $f\in \F_q[x]$ is a suitable polynomial and $\alpha \in \F_q^*$. More precisely, in \cite{Chara2011} we obtained conditions in order to have non empty splitting locus of towers recursively defined by \eqref{ecuintro}, and now we will deal with the ramification locus. The finiteness of the ramification locus of towers recursively defined by \eqref{ecuintro} will suffice to prove their good asymptotic behavior because an important and well-known result of Garcia and Stichtenoth states that if a tame tower has non-empty splitting locus and finite ramification locus, then the tower is asymptotically good, \cite[Theorem 2.1]{GST97}. We will show new examples of asymptotically good towers recursively defined by \eqref{ecuintro}. 

In Section $2$ we give the basic definitions and we establish the notation to be used throughout the paper. In Section $3$ we prove our main results. In particular, in Theorem~\ref{teofiniteram}, we give sufficient conditions to have asymptotically good Kummer type towers recursively defined by \eqref{ecuintro}. The first part of Section $3$ is devoted to prove some auxiliary results needed in the proof of Theorem~\ref{teofiniteram}. An interesting feature of this results is that they can be easily implemented in a computer so that we were able to search for many equations of the form \eqref{ecuintro} defining good towers. Consequently, we show different examples of asymptotically good towers of Kummer type and we observe that some known examples can be obtained from our general results. In particular, in Example~\ref{ejem81} we present new interesting examples of asymptotically good Kummer type towers whose defining equation have coefficients in $\F_{9}\setminus \F_3$.


 \section{Preliminaries}\label{section2}
Let $q$ be a prime power. An algebraic function field $F/\F_q$ is a finite algebraic extension  of the rational function field $\F_q(x)$, where $x$ is a transcendental element over $\F_q$.

Let $\m{F}=(F_0, F_1, \ldots)$ be a sequence  of function fields over $\F_q$. We shall say that $\m{F}$ is \emph{admissible} if\begin{enumerate}[1.]
  \item $F_0\subsetneq F_1 \subsetneq F_2 \subsetneq \cdots$,
  \item the field extension $F_{i+1}/F_i$ is finite and separable for all $i\geq 0$, and
  \item the field $\F_q$ is algebraically closed in $F_i$ for all $i\geq 0$, i.e., the only elements of $F_i$ which are algebraic over $\F_q$ are the elements of $\F_q$. In this case we shall say that $\F_q$ is the full constant field of each $F_i$.
\end{enumerate}
If the genus $g(F_i)$ grows to infinity as $i\rightarrow \infty$, we say that the admissible sequence $\m{F}$ is a \emph{tower of function fields over} $\F_q$.

We shall say that an admissible sequence $\m{F}$ is \emph{recursively defined} if there exist a bivariate polynomial $H\in \F_q[S,T]$ and transcendental elements $x_i$, such that for all $i\geq0$ the following holds:
\begin{enumerate}[1.]
\item $F_0=\F_q(x_0)$ is the rational function field.
\item $F_{i+1}=F_i(x_{i+1})$ with $H(x_i, x_{i+1})=0$.
\item $[F_{i+1}:F_i]=\deg_{T}H$.
\end{enumerate}


Let $\P(F)$ denote the set of all places of a function field $F/\F_q$. The following definitions are important in the study of the asymptotic behavior of sequences of function fields. Let $\m{F}=(F_0, F_1, \ldots)$ be an admissible sequence  of function fields over $\F_q$. A place $P\in \mathbb{P}(F_i)$ {\em splits completely} in $\mathcal{F}$ if $P$ splits completely in each extension $F_j/F_i$. The {\em splitting locus} of $\mathcal{F}$ over $F_0$ is defined as
\[Split(\mathcal{F}/F_0):=\{P\in \mathbb{P}(F_0)\,:\,\text{$\deg(P)=1$ and $P$ splits completely in $\mathcal{F}$}\}\,,\] where $\deg(P)$ is the degree of the place $P$.
A place $P\in \mathbb{P}(F_i)$ is {\em ramified} in $\mathcal{F}$ if $P$ is ramified in any extension $F_j/F_i$.
The {\em ramification locus} of $\m{F}$ over $F_0$ is the set
$$Ram(\m{F}/F_0):=\{P\in \mathbb{P}(F_0)\,:\,\text{$P$ ramified in some extension }F_n/F_0\}.$$ A place $P\in \mathbb{P}(F_i)$ {\em is totally ramified} in $\mathcal{F}$ if $P$ is totally ramified in each extension $F_j/F_i$.  The {\em complete ramification locus} of $\mathcal{F}$ over $F_0$ is defined as
\[Cram(\mathcal{F}/F_0) :=\{P\in \mathbb{P}(F_0)\,:\,\text{$\deg(P)=1$ and $P$ is totally ramified in $\mathcal{F}$}\}\,.\]
Since every place $Q\in \mathbb{P}(F_i)$ lying above a place in $Split(\mathcal{F}/F_0)\cup Cram(\mathcal{F}/F_0)$ is a rational place (i.e. of degree one), we have that
\begin{equation}\label{splitcram}
N(F_i)\geq [F_i:F_0]|Split(\mathcal{F}/F_0)|+|Cram(\mathcal{F}/F_0)|\,,
\end{equation} where $N(F_i)$ is the number of rational places of $F_i$.

Notice that if for an admissible  sequence of function fields  $\mathcal{F}=(F_0, F_1, \ldots)$ we have that $Split(\mathcal{F}/F_0)\neq \emptyset$ (in other words, there is a rational place  $P$ in $F_0$ that splits completely in each extension $F_i/F_0$) then, by Hurwitz genus formula (\cite[Theorem 3.4.13]{Stichbook09}), we have that $g(F_i)\rightarrow\infty$ as  $i\rightarrow\infty$  so that $\mathcal{F}$ is actually a tower. 
%
%

  Given a finite extension $E/F$ and a place $P\in \P(F)$ there are finitely many places $Q\in \P(E)$ lying above $P$. We will write $Q|P$ when $Q$ lies over $P$. The extension $E/F$ is said to be \emph{tame} if the ramification index $e(Q|P)$ is relatively prime to the characteristic of $\F_q$, for all places $P\in \P(F)$ and all $Q|P$. We shall say that an admissible sequence $\m{F}=(F_0, F_1, \ldots)$ of function fields over $\F_q$ is \emph{tame} if all the extensions $F_i/F_0$ are tame.

\section{Kummer Type Towers}

  As we said in the Introduction,  it is well-known that a tame recursive tower is asymptotically good if it has non-empty splitting locus and finite ramification locus. The next result, proved in \cite{Chara2011}, gives sufficient conditions in order to have non-empty splitting locus, in the particular class of sequences of Kummer type recursively defined by \eqref{ecuintro}.

We will use the following notation. For a given rational function $f\in \F_q(T)$ the set of zeros of $f$ in an algebraic closure $\overline{\F}_q$ of $\F_q$ will be denoted either by $Z_f$, or by $Z_{f(T)}$ in case we need to specify $f(T)$.
\begin{prop}\label{phitheorem}
Let $m>r\geq 1$ be such that $\gcd(m,m-r)=1$ and $\gcd(m,q)=1$. Let $\alpha \in \F_q^\ast$ and consider de rational functions $$a(T)=T^m \qquad \text{and} \qquad b(T)=\frac{T^m-\alpha f(T)+\alpha}{f(T)},$$ where $f(T)\in \F_q[T]$ is a polynomial of degree $r$. If $\F_q$ is a splitting field for $T^m+\alpha$ and $Z_f\cap Z_{T^m+\alpha}= \emptyset$, then for the $(a,b)$-recursive sequence of function fields $\mathcal{F}=(F_0, F_1, \ldots)$ we have that
\[|Split(\mathcal{F}/F_0)|\geq m\,,\] and $$N(F_i)\geq m^{i+1}+1.$$
\end{prop}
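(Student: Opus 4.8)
The plan is to exploit the symmetry of the recursive equation. Write $b(T) = a(T)/f(T) + \alpha(1 - 1/f(T))$; more usefully, notice that the defining equation $y^m = (x^m - \alpha f(x) + \alpha)/f(x)$ can be rewritten as $f(x)(y^m - \alpha) = x^m - \alpha$, i.e. $x^m - \alpha = f(x)(y^m - \alpha)$. The key observation is that if $\beta \in \overline{\F}_q$ is a root of $T^m + \alpha$ (so $\beta^m = -\alpha$... — more precisely $\beta^m = -\alpha$; one should double-check the sign convention, but the splitting hypothesis is what matters), then a place of $F_0 = \F_q(x_0)$ sitting at $x_0 = \beta$ should be a place at which the whole recursion degenerates nicely: plugging $x = \beta$ into $x^m - \alpha = f(x)(y^m-\alpha)$ forces $y^m - \alpha$ to take a controlled value, and since $\F_q$ is assumed to be a splitting field for $T^m + \alpha$, every such $\beta$ lies in $\F_q$, so these are rational places. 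First I would make this precise and identify $m$ rational places $P_1,\dots,P_m$ of $F_0$ — the zeros of $x_0^m + \alpha$ — as candidates for the splitting locus.

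Second, I would verify that each such place splits completely in every step $F_{i+1}/F_i$. The mechanism should be: if $x_i = \beta$ with $\beta^m = -\alpha$, then the defining relation forces $x_{i+1}^m - \alpha = f(\beta)(\alpha - \alpha) $ to collapse — let me instead organize it as $x_{i+1}^m = b(x_i)$ evaluated at $x_i = \beta$ giving $x_{i+1}^m = (\beta^m - \alpha f(\beta) + \alpha)/f(\beta) = (-\alpha - \alpha f(\beta) + \alpha)/f(\beta) = -\alpha$ (here we use $Z_f \cap Z_{T^m+\alpha} = \emptyset$, so $f(\beta) \neq 0$ and the expression is defined and nonzero). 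Thus $x_{i+1}^m = -\alpha$ again, so $x_{i+1}$ is again a root of $T^m + \alpha$, which lies in $\F_q$ by hypothesis; the extension $F_{i+1} = F_i(x_{i+1})$ is generated by an element satisfying $T^m + \alpha$, a polynomial that already splits into linear factors over $\F_q \subseteq F_i$, hence the place of $F_i$ at $x_i = \beta$ splits completely in $F_{i+1}$. Iterating, $P_1,\dots,P_m$ split completely in the whole tower, so $|Split(\mathcal{F}/F_0)| \geq m$.

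Third, the bound on rational places follows from \eqref{splitcram}: since the tower is Kummer of degree $m$ at each step, $[F_i:F_0] = m^i$, and with $|Split(\mathcal{F}/F_0)| \geq m$ we get $N(F_i) \geq m^i \cdot m + |Cram| \geq m^{i+1}$. To squeeze out the extra $+1$, I would point to a rational place outside the splitting set that persists — the natural candidate is the place at infinity (the pole of $x_0$), or a zero/pole forced by the structure of $a$ and $b$; one checks it is rational in every $F_i$ and distinct from the places lying over $P_1,\dots,P_m$, giving $N(F_i) \geq m^{i+1} + 1$.

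The main obstacle I expect is the bookkeeping in step three: confirming that $x_i$ is genuinely transcendental at each stage (so that "$x_i = \beta$" really names a degree-one place and not a constant, which is where the admissibility/tower hypotheses and the condition $\gcd(m,m-r)=1$ enter, ensuring $[F_{i+1}:F_i] = m$ and $\F_q$ stays the full constant field), and pinning down the extra rational place cleanly enough to justify the "$+1$" without ambiguity about whether it could coincide with a completely split place or fail to be rational in some $F_i$.
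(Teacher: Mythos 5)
The paper does not prove this proposition---it is quoted from \cite{Chara2011}---but your argument is the standard one and is essentially correct: the zeros of $x_0^m+\alpha$ are rational, the recursion sends roots of $T^m+\alpha$ to roots of $T^m+\alpha$ (using $Z_f\cap Z_{T^m+\alpha}=\emptyset$ so $b(\beta)=-\alpha$), and Kummer's theorem gives complete splitting since $T^m+\alpha$ has $m$ distinct roots in $\F_q$ (distinctness from $\gcd(m,q)=1$ and $\alpha\neq 0$). Two small points to tighten: $x_{i+1}$ does not ``satisfy $T^m+\alpha$''---it is transcendental and satisfies $T^m-b(x_i)$; what splits is the \emph{reduction} of this minimal polynomial modulo the place, which equals $T^m+\alpha$ because $v_P(b(x_i))=0$ there. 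And the ``$+1$'' is exactly the pole $P_\infty$ of $x_0$: since $v_{P_\infty}(b(x_0))=-(m-r)$ is coprime to $m$, this place is totally ramified at every step (the unique place above it is again a pole of $x_{i+1}$), hence contributes one rational place to each $F_i$, disjoint from those above the split places.
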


Since we have non-empty splitting locus for the Kummer sequences recursively defined by \eqref{ecuintro}, we shall focus, from now on, in finding sufficient conditions in order to ensure finite ramification. First we give a simple result which will be useful later. We will denote by $x(P)$ the residue class mod $P$ of $x \in F$.
%

\begin{lem}\label{lemaprevio}
  Let $\m{F}=(F_0, F_1, \ldots)$ be an admisible recursive sequence of function fields over $\F_q$ defined by the equation $H(S,T)=0$, where $H\in \F_q[S,T]$. Assume that there is a set $S_0\subset \overline{\F}_q$ such that if $\gamma \in S_0$ and $H(\beta, \gamma)=0$ then $\beta \in S_0$. Let $\{x_i\}_{i\geq 0}$ be a sequence of trascendental elements over $\F_q$ such that $F_0=\F_q(x_0)$ and $F_{i+1}=F_i(x_{i+1})$ where $H(x_i, x_{i+1})=0$ for all $i\geq 0$. Let $Q$ be a place in $\P(F_i)$ such that $x_i(Q)\in S_0$. Then $x_0(Q)\in S_0$.
\end{lem}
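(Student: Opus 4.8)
The plan is to argue by downward induction on the level, peeling off one extension at a time. The key observation is that it suffices to prove the following single-step statement: if $Q' \in \P(F_{j+1})$ satisfies $x_{j+1}(Q') \in S_0$, then for $Q = Q' \cap F_j$ we have $x_j(Q) \in S_0$. Once this is established, starting from the given place $Q \in \P(F_i)$ with $x_i(Q) \in S_0$ and setting $Q_i = Q$, $Q_{j} = Q_{j+1} \cap F_j$ for $j = i-1, i-2, \ldots, 0$, a finite induction gives $x_j(Q_j) \in S_0$ for all $j \le i$, and in particular $x_0(Q_0) = x_0(Q \cap F_0) \in S_0$. Note $x_0(Q\cap F_0) = x_0(Q)$ since residues are compatible with restriction of places.

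For the single-step statement, let $Q' \in \P(F_{j+1})$ with $x_{j+1}(Q') \in S_0$, and set $Q = Q' \cap F_j$. Let $\beta = x_j(Q)$ and $\gamma = x_{j+1}(Q')$; a priori these lie in the residue field of $Q'$, which is a finite extension of $\F_q$, hence inside $\overline{\F}_q$. From the defining relation $H(x_j, x_{j+1}) = 0$ in $F_{j+1}$, reducing modulo $Q'$ yields $H(\beta, \gamma) = 0$ — here one must be slightly careful that all the elements involved are actually integral at $Q'$ so that reduction is legitimate, but since $x_j(Q)$ is assumed to exist as a residue (i.e. $x_j$ is finite at $Q$, equivalently at $Q'$) and $x_{j+1}(Q')$ exists, every coefficient-times-monomial term of $H$ is finite at $Q'$ and the reduction is a ring homomorphism on the local ring at $Q'$. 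Since $\gamma = x_{j+1}(Q') \in S_0$ and $H(\beta, \gamma) = 0$, the hypothesis on $S_0$ forces $\beta \in S_0$, i.e. $x_j(Q) \in S_0$, as required.

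The main point to be careful about — the only real obstacle — is the integrality issue: the lemma hypothesis speaks of $x_i(Q)$ being defined, so $Q$ is not a pole of $x_i$, but one needs that this propagates downward so that $x_j$ is finite at $Q_j$ for each $j < i$, making the reduction of $H(x_j, x_{j+1}) = 0$ meaningful at each stage. This follows because if $Q_{j+1}$ is not a pole of $x_{j+1}$, then $H(x_j, x_{j+1}) = 0$ expresses a monic (up to the leading coefficient in $T$, which lies in $\F_q$) polynomial relation; more simply, one can observe that the very conclusion $x_j(Q_j) \in S_0 \subset \overline{\F}_q$ that we are proving at each step already includes the assertion that $x_j$ is finite at $Q_j$, so the induction hypothesis carries exactly the integrality we need for the next reduction. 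With this bookkeeping in place the argument is routine.
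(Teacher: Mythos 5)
Your argument is essentially the paper's proof: reduce the relation $H(x_j,x_{j+1})=0$ modulo the place, invoke the closure hypothesis on $S_0$ to pass from $x_{j+1}(Q)\in S_0$ to $x_j(Q)\in S_0$, and iterate down to $x_0$. So the core of the proposal is fine and matches the paper.

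The one place where you go beyond the paper --- the integrality bookkeeping --- is the one place where your argument does not actually close. To reduce $H(x_j,x_{j+1})=0$ modulo $Q$ and land in the hypothesis ``$\gamma\in S_0$ and $H(\beta,\gamma)=0$ imply $\beta\in S_0$'' you need $x_j$ to be \emph{finite} at $Q$, i.e.\ $\beta\in\overline{\F}_q$; but your induction hypothesis only gives finiteness of $x_{j+1}$ at $Q$, and finiteness of $x_j$ is part of the \emph{conclusion} of the step, not of its hypothesis --- so the sentence ``the induction hypothesis carries exactly the integrality we need'' is circular. Your alternative remark about $H$ being monic in $T$ also points the wrong way: monicity in $T$ makes $x_{j+1}$ integral over $\F_q[x_j]$, not $x_j$ integral over $\F_q[x_{j+1}]$, and for a general $H$ (e.g.\ $H(S,T)=ST-1$) finiteness of $x_{j+1}$ at $Q$ does not force finiteness of $x_j$ at $Q$. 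That said, the paper's own proof silently makes the same assumption, and in the actual application (Proposition~\ref{finiteram}, where $H(S,T)=T^m b_2(S)-b_1(S)$ with $\deg b_1=m>\deg b_2$) a pole of $x_j$ at $Q$ would force a pole of $x_{j+1}$ at $Q$, so the gap is harmless there; it just cannot be patched by the inductive bookkeeping you describe for an arbitrary $H$.
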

 \begin{proof}
 Let   $Q$ be a place of  $F_i$ such that $x_i(Q)\in S_0$. Since $\m{F}$ is recursively defined by $H$, we have that $H(x_{i-1},x_i)=0$ for all $i\geq 0$. By reducing this equation modulo $Q$ we obtain $H(x_{i-1}(Q),x_i(Q))=0$, and by hypothesis $x_{i-1}(Q) \in S_0$. Now, since $H(x_{i-2},x_{i-1})=0$, the reduction modulo $Q$ of this equation shows that $x_{i-2}(Q)\in S_0$. Continuing in this way, we arrive to the desired conclusion.
 \end{proof}

Next we prove a proposition giving sufficient conditions for the finiteness of the ramification locus of a particular class of recursive sequences of Kummer type. Recall that if $g(T)\in \F_q[T]$, we denote by $Z_{g}$ the set of zeros of $g(T)$ in an algebraic closure $\overline{\F}_q$ of $\F_q$.

\begin{prop}\label{finiteram}
  Let $m\geq 2$ be an integer with $q\equiv 1 \mod m$. Consider the sequence $\m{F}=(F_0, F_1, \ldots)$ of function fields over $\F_q$ defined recursively by the equation \begin{equation}\label{ecufinitefam}
    y^m=\frac{b_1(x)}{b_2(x)}
  \end{equation} where $b_1(T), b_2(T)\in \F_q[T]$ are coprime polynomials such that $\deg(b_1(T))=m$ and $\deg(b_2(T))=m-r$ with $\gcd(m,r)=1$. Then $\m{F}$ is a tame admissible sequence. Assume now that there is a finite set $S_0\subset \overline{\F}_q$ with the following properties:
  \begin{enumerate}[1.]
    \item\label{itemifiniteram} $Z_{b_1} \subset S_0.$
    \item\label{itemiifiniteram} $Z_{b_2} \subset S_0$.
    \item\label{itemiiifiniteram} $Z_{\sigma_\gamma}\subset S_0$, for all $\gamma \in S_0$, where $\sigma_\gamma(T)=b_2(T)\gamma^m-b_1(T)$.
  \end{enumerate}
    Then $Ram(\m{F}/F_0)$ is a finite set. More precisely, if $P\in \P(\F_0)$ is a ramified place in the sequence $\m{F}$ then either $P=P_\infty$ is the pole of $x_0$ in $F_0$, or $P$ is the zero of $x_0-\gamma$, for some $\gamma \in S_0$.
\end{prop}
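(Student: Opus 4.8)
The plan is to combine Kummer theory (to control which places of intermediate fields can ramify in a single step) with Lemma~\ref{lemaprevio} (to propagate the information about residues back down to $F_0$). First I would settle the easy structural claims: since $b_1$ and $b_2$ are coprime and $\deg b_1 = m \neq \deg b_2 = m-r$, the rational function $b_1(x)/b_2(x)$ is not an $m$-th power in $F_i=\F_q(x_i)$ for any $i$ (look at the pole or zero divisor: the place at infinity of $x_i$ contributes with multiplicity $|m-(m-r)| = r$, and $\gcd(m,r)=1$, so no nontrivial $m$-th root is possible); together with $q\equiv 1\bmod m$ this makes each step $F_{i+1}/F_i$ a genuine Kummer extension of degree $m$, hence cyclic, separable, and — because $\gcd(m,q)=1$ — tame. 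Separability plus strict inclusion gives admissibility, and constant-field stability follows since a Kummer extension of $\F_q(x_i)$ with $\F_q$ containing the $m$-th roots of unity has the same constant field. This disposes of the first assertion.

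For the ramification locus, the key is the classical description of ramification in a Kummer extension $E(y)/E$ with $y^m = u$: a place $R$ of $E$ ramifies if and only if $v_R(u)$ is not divisible by $m$, and then $e = m/\gcd(m, v_R(u))$. So I would fix a ramified place $P\in\P(F_0)$ and a tower of places $P = P_0, P_1, \ldots, P_n$ with $P_{j+1}\mid P_j$, where $P_{j+1}$ ramifies in $F_{j+1}/F_j$; it suffices to show $x_0(P_0)\in S_0\cup\{\infty\}$. At the top, $v_{P_n}\!\left(b_1(x_n)/b_2(x_n)\right)\not\equiv 0 \bmod m$. Now I analyze where this can happen: writing the divisor of $b_1(x_n)/b_2(x_n)$ in $\F_q(x_n)$, its zeros sit over zeros of $b_1$, its poles over zeros of $b_2$ and over the pole of $x_n$; at a place lying over a zero $\gamma$ of $b_1$ (equivalently a zero of $b_2$) the valuation is a multiple of the local ramification index of $x_n$ over $\F_q(x_n)$ — but here we are already working inside $\F_q(x_n)$, so the relevant places are exactly the zeros of $x_n - \gamma$ for $\gamma\in Z_{b_1}\cup Z_{b_2}$ and the pole of $x_n$. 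Hence $P_n$ ramifying forces $x_n(P_n)\in Z_{b_1}\cup Z_{b_2}\cup\{\infty\}\subseteq S_0\cup\{\infty\}$.

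I then need to push this down the tower and show the defining equation, together with hypothesis~\ref{itemiiifiniteram}, keeps residues inside $S_0$. The recursion is $H(S,T) = b_2(S)T^m - b_1(S) = 0$, i.e. $\sigma_S(T)=0$ in the notation of the proposition. Suppose $x_{j+1}(P_{j+1})=\gamma\in S_0$ (the finite case; the place-at-infinity case is handled separately, tracking poles). Reducing $b_2(x_j)x_{j+1}^m = b_1(x_j)$ modulo $P_{j+1}$ gives $b_2(x_j(P_{j+1}))\gamma^m = b_1(x_j(P_{j+1}))$, i.e. $x_j(P_{j+1})$ is a zero of $\sigma_\gamma$, so by~\ref{itemiiifiniteram} it lies in $S_0$; since $x_j(P_j) = x_j(P_{j+1})$ (residues agree for $P_{j+1}\mid P_j$, provided $x_j$ has no pole there, which one checks using $\deg b_1 = m$ versus $\deg b_2 = m-r$), we get $x_j(P_j)\in S_0$. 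Combined with the top-level conclusion, the hypothesis that $S_0$ is ``closed under taking preimages through $H$'' in exactly the sense of Lemma~\ref{lemaprevio} lets me invoke that lemma directly with this $S_0$ to conclude $x_0(P_0)\in S_0$, i.e. $P_0$ is the zero of $x_0-\gamma$ for some $\gamma\in S_0$. The one genuinely delicate point — and the step I expect to be the main obstacle — is the bookkeeping at the pole of $x_i$: I must verify that if some $x_{j}$ has a pole at $P_j$ then the defining equation forces $x_{j-1}$ to have a pole there as well (using again that $b_1, b_2$ have different degrees, so $b_1(x)/b_2(x)$ has a pole of order $r$ at the pole of $x$ and $y^m$ absorbs it compatibly only when $x$ itself is infinite), so that the ``pole of $x_0$'' alternative of the statement is the only extra possibility beyond the $\gamma\in S_0$ case; everything else is a routine unwinding of Kummer ramification and reduction mod $P$.
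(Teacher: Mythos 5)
Your overall strategy is the same as the paper's: use Kummer ramification theory to show that at the step where ramification actually occurs the residue of $x_j$ must lie in $Z_{b_1}\cup Z_{b_2}\cup\{\infty\}$, and then use hypothesis~3 together with Lemma~\ref{lemaprevio} to push membership in $S_0$ down to $x_0$. That skeleton is correct (modulo an off-by-one: the valuation condition for ramification in $F_n/F_{n-1}$ is on $v_{P_{n-1}}\bigl(b_1(x_{n-1})/b_2(x_{n-1})\bigr)$, not on $v_{P_n}$), and your variant of the Kummer step --- reading off the zero/pole divisor of $b_1(x_j)/b_2(x_j)$ downstairs in $\F_q(x_j)$ rather than looking at $v_{P_{j+1}}(x_{j+1})$ upstairs as the paper does --- is a perfectly good equivalent formulation.

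The one step you single out as the main obstacle is, however, resolved incorrectly. It is \emph{not} true that a pole of $x_j$ at $Q$ forces a pole of $x_{j-1}$ at $Q$: from $x_j^m=b_1(x_{j-1})/b_2(x_{j-1})$ with $b_1,b_2$ coprime, the right-hand side has a pole at $Q$ exactly when either $x_{j-1}$ has a pole at $Q$ \emph{or} $b_2\bigl(x_{j-1}(Q)\bigr)=0$. So your proposed verification that ``$y^m$ absorbs the pole compatibly only when $x$ itself is infinite'' fails, and the descent as you describe it would break at the first level where the pole of $x_j$ sits over a zero of $b_2(x_{j-1})$. The repair is immediate and uses hypothesis~2, which is precisely why it is in the statement: in that second alternative $x_{j-1}(Q)\in Z_{b_2}\subset S_0$, and from that level downward you switch to the $S_0$-propagation via $\sigma_\gamma$ and hypothesis~3. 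The correct dichotomy in the descent is therefore: if $x_k(Q)=\infty$ then $x_{k-1}(Q)=\infty$ or $x_{k-1}(Q)\in S_0$, and if $x_k(Q)\in S_0$ then $x_{k-1}(Q)\in S_0$; by induction $x_0(Q)\in S_0\cup\{\infty\}$, which is exactly the stated conclusion since $P_\infty$ is the unique place of $F_0=\F_q(x_0)$ where $x_0$ has a pole. With that correction your argument closes; note that the paper's own write-up glosses over the same point (its reduction $b_2(x_i(Q))=b_1(x_i(Q))(x_{i+1}^{-1}(Q))^m$ tacitly assumes $x_i(Q)\neq\infty$), so you identified the genuinely delicate spot even though your proposed fix for it is wrong.
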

\begin{proof}
By hypothesis each extension $F_n/F_{n-1}$ is cyclic of degree $m$. It is also easy to see that the pole $P_\infty$ of $x_0$ in $F_0$ is totally ramified in the sequence $\m{F}$. Therefore $\F_q$ is the full constant field of each $F_n$, and then $\m{F}$ is admissible and tame.

  Suppose that $P\in \P(F_0)$ is ramified in $F_n/F_0$. Choose $Q\in \P(F_n)$ above $P$ such that $e(Q|P)>1$ and let $P_i=Q\cap F_i$ be the restriction of $Q$ to $F_i$, for each $i=0, 1, \ldots, n$. Since $Q|P$ is ramified, then $P_{i+1}|P_i$ is ramified for some index $i$.

  From the defining equation, $$x_{i+1}^m=\frac{b_1(x_i)}{b_2(x_i)},$$ and from the ramification theory of Kummer extensions (see, for example, \cite[Proposition 3.7.3]{Stichbook09}), it follows that $P_{i+1}$ is either a zero or a pole of $x_{i+1}$ in $F_{i+1}$.

  If $P_{i+1}$ is a zero of $x_{i+1}$, we have that $x_{i+1}(Q)=0$.
   By reducing the equation $x_{i+1}^mb_2(x_i)=b_1(x_i),$ modulo $Q$ we obtain $$0=x_{i+1}(Q)^mb_2(x_i(Q))=b_1(x_i(Q)),$$ and this implies that $x_i(Q)=\gamma$ for some $\gamma\in \overline{\F}_q$ such that $b_1(\gamma)=0$. Thus $x_i(Q)\in S_0$ by \eqref{itemifiniteram}.

  Suppose now that $P_{i+1}$ is a pole of $x_{i+1}$. Then $x_{i+1}^{-1}\in P_{i+1} \subset Q$. Hence $x_{i+1}^{-1}(Q)=0$. Since $b_2(x_{i})=b_1(x_i)x_{i+1}^{-m}$, by reducing modulo $Q$ we have that $b_2(x_{i}(Q))=b_1(x_i(Q))(x_{i+1}^{-1}(Q))^{m}=0$ and this implies that that $x_i(Q)=\gamma$ for some $\gamma\in \overline{\F}_q$ such that $b_2(\gamma)=0$. Therefore $x_i(Q)\in S_0$ by \eqref{itemiifiniteram}. By \eqref{itemiiifiniteram} and Lemma~\ref{lemaprevio} we have that $x_0(Q)\in S_0$.

Now we can easily see that if $P\in \P(F_0)$ is a ramified place in $\m{F}$ then, $P=P_\infty$ if $v_{P_1}(x_1)<0$ and $P$ is the zero of $x_0-\gamma$, for some $\gamma \in S_0$, if $v_{P_1}(x_1)\geq 0$. Hence $Ram(\m{F}/F_0) \subset\{P_{x_0-\gamma}\,:\,\gamma \in S_0\}\cup \{P_\infty\}$ and since $S_0$ is finite, $\m{F}$ has finite ramification locus.

  \end{proof}

%

Now we can prove one of our main results.

\begin{thm}\label{teofiniteram}
  Let $m\geq 2$ be an integer and $q$ a prime power such that $q\equiv 1 \mod m$. Let $\alpha \in \F_q$ such that $T^m+\alpha$ splits into linear factors in $\F_q$ and let $f(T)\in \F_q[T]$ be a separable polynomial of degree $m-r$ with $\gcd(m,r)=1$ such that $Z_{T^m+\alpha}\cap Z_{f}=\emptyset$.  Assume that there is a finite set $S_0\subset \overline{\F}_q$ with the following properties:
  \begin{enumerate}[1.]
    \item\label{itemifiniteram} $Z_{T^m-\alpha f(T)+\alpha}\subset S_0.$
    \item\label{itemiifiniteram} $Z_{f} \subset S_0$.
    \item\label{itemiiifiniteram} $Z_{\sigma_\gamma}\subset S_0$, for all $\gamma \in S_0$, where $\sigma_\gamma(T)=f(T)(\gamma^m+\alpha)-(T^m+\alpha)\in \overline{\F}_q[T]$.
  \end{enumerate} Then the sequence $\m{F}=(F_0, F_1, \ldots)$ of function fields defined by the equation \begin{equation}\label{ecufinitefam}
    y^m=\frac{x^m - \alpha f(x)+\alpha}{f(x)}
  \end{equation}
  is an asymptotically good tower of Kummer type over $\F_q$ and
$$\lambda(\m{F})\geq\frac{2m}{|S_0|-1}>0.$$
\end{thm}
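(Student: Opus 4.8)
The plan is to deduce Theorem~\ref{teofiniteram} from the two results already in hand: Proposition~\ref{phitheorem} (which guarantees a non-empty splitting locus) and Proposition~\ref{finiteram} (which guarantees a finite ramification locus), together with the Garcia--Stichtenoth criterion \cite[Theorem 2.1]{GST97} that a tame tower with non-empty splitting locus and finite ramification locus is asymptotically good. The first step is to put the defining equation~\eqref{ecufinitefam} into the form treated in Proposition~\ref{finiteram}: write $b_1(T)=T^m-\alpha f(T)+\alpha$ and $b_2(T)=f(T)$. Then $\deg b_2=m-r$ with $\gcd(m,r)=1$ by hypothesis, and I must check $\deg b_1=m$ and that $b_1,b_2$ are coprime. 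Since $\deg f=m-r<m$ (as $r\ge 1$), the leading term of $b_1$ is $T^m$, so $\deg b_1=m$. Coprimality follows because a common zero $\gamma$ of $b_1$ and $b_2=f$ would satisfy $0=b_1(\gamma)=\gamma^m-\alpha f(\gamma)+\alpha=\gamma^m+\alpha$, contradicting $Z_{T^m+\alpha}\cap Z_f=\emptyset$.

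Next I verify that hypotheses~\ref{itemifiniteram}--\ref{itemiiifiniteram} of Theorem~\ref{teofiniteram} are exactly hypotheses~\ref{itemifiniteram}--\ref{itemiiifiniteram} of Proposition~\ref{finiteram} for this choice of $b_1,b_2$. Conditions~\ref{itemifiniteram} and~\ref{itemiifiniteram} match verbatim since $b_1=T^m-\alpha f(T)+\alpha$ and $b_2=f$. For condition~\ref{itemiiifiniteram}, the polynomial in Proposition~\ref{finiteram} is $b_2(T)\gamma^m-b_1(T)=f(T)\gamma^m-(T^m-\alpha f(T)+\alpha)=f(T)(\gamma^m+\alpha)-(T^m+\alpha)=\sigma_\gamma(T)$, so the two formulations coincide and $Z_{\sigma_\gamma}\subset S_0$ for all $\gamma\in S_0$ by hypothesis. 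Proposition~\ref{finiteram} then tells us that $\m{F}$ is a tame admissible sequence and that $Ram(\m{F}/F_0)$ is finite, contained in $\{P_{x_0-\gamma}:\gamma\in S_0\}\cup\{P_\infty\}$. Since $P_\infty$ is totally ramified, $\F_q$ is the full constant field and the genus tends to infinity (by Hurwitz, using the non-empty splitting locus below), so $\m{F}$ is a tame recursive tower.

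For the splitting locus I invoke Proposition~\ref{phitheorem} with $r$ replaced by the current parameter: there $a(T)=T^m$, $b(T)=(T^m-\alpha f(T)+\alpha)/f(T)$, and $f$ has degree $r$ in that statement, whereas here $\deg f=m-r$; I must reconcile the two by noting that the relevant arithmetic condition is $\gcd(m,\deg f)=1$, i.e. $\gcd(m,m-r)=\gcd(m,r)=1$, which holds, so Proposition~\ref{phitheorem} applies and yields $|Split(\m{F}/F_0)|\ge m$. Thus $\m{F}$ is a tame tower with non-empty splitting locus and finite ramification locus, hence asymptotically good by \cite[Theorem 2.1]{GST97}. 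For the explicit bound, recall $\lambda(\m{F})=\liminf_i N(F_i)/g(F_i)$; the Garcia--Stichtenoth estimate gives, for a tame tower, $\lambda(\m{F})\ge \dfrac{2\,|Split(\m{F}/F_0)|}{|Ram(\m{F}/F_0)|-2}$ (or the analogous form with the degree-weighted ramification count), and substituting $|Split(\m{F}/F_0)|\ge m$ and $|Ram(\m{F}/F_0)|\le |S_0|+1$ yields $\lambda(\m{F})\ge \dfrac{2m}{|S_0|-1}>0$.

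The main obstacle is bookkeeping rather than depth: I must be careful with the role of the genus-bound constant in the quantitative Garcia--Stichtenoth inequality so that the ramification count $|S_0|+1$ produces exactly $|S_0|-1$ in the denominator, and I must make sure the degree hypothesis on $f$ in Proposition~\ref{phitheorem} is correctly translated (the splitting argument there only uses $\gcd(m,\deg f)=1$ and $Z_f\cap Z_{T^m+\alpha}=\emptyset$, both of which we have). Everything else is a direct citation of the two preceding propositions and the standard criterion.
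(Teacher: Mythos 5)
Your proposal is correct and follows essentially the same route as the paper: reduce the ramification statement to Proposition~\ref{finiteram} (with $b_1=T^m-\alpha f(T)+\alpha$, $b_2=f$), get the splitting locus from Proposition~\ref{phitheorem}, and conclude via \cite[Theorem 2.1]{GST97} with the degree-weighted ramification count $\le |S_0|+1$ giving the denominator $|S_0|-1$. The only step the paper makes explicit that you gloss over is that $S_0$ may lie in $\overline{\F}_q\setminus\F_q$, so one first works over a finite constant field extension $\F_{q^s}$ containing $S_0$ and then uses invariance of the tower's genus under constant field extensions; this is a minor bookkeeping point, and your verification of the degree and coprimality hypotheses of Proposition~\ref{finiteram} is a detail the paper itself omits.
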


\begin{proof}
As in Proposition~\ref{finiteram}, we have by hypothesis that each extension $F_n/F_{n-1}$ is cyclic of degree $m$, and the pole $P_\infty$ of $x_0$ in $F_0$ is totally ramified in the sequence $\m{F}$. Therefore $\F_q$ is the full constant field of each $F_n$, and then $\m{F}$ is admissible and tame.

  Using Proposition~\ref{phitheorem} we have that $|Split(\m{F}/F_0)|\geq m$ and then $\m{F}$ is a tower over $\F_q$.

The fact that $S_0\subset  \overline{\F}_q$ is finite, implies that for some integer $s$ we have that $S_0 \subset \F_{q^s}$ and satisfies the conditions in Proposition~\ref{finiteram}. Therefore we have that if $P\in \P(\F_0)$ is a ramified place in the tower $\m{F}$, then $P=P_\infty$ or $P$ is the zero of $x_0-\gamma$, for some $\gamma \in S_0$. Then the ramification locus is finite and thus the tower has a finite genus over $\F_{q^s}$. Since the genus of a tower does not change in constant field extensions, we conclude that $\m{F}$ has finite genus over $\F_q$.

Finally, since $\m{F}$ is a tame recursive tower with non-empty splitting locus and finite ramification locus, Theorem 2.1 of \cite{GST97} implies that $\m{F}$ is an asymptotically good tower of Kummer type over $\F_q$. More over, since $|Split(\m{F}/F_0)|\geq m$ and $Ram(\m{F}/F_0)\subset S_0\cup \{\infty\}$ then $$\lambda(\m{F})\geq\frac{2m}{|S_0|+1-2},$$ as desired.
  \end{proof}

  \begin{ex}\label{ejemtorreoptimaf9}
    Let $m=2$, $q=9$. 
    Let $\m{G}=(G_0, G_1, \ldots)$ be defined recursively by $$y^2=\frac{x^2-x+1}{x}.$$ This equation is of the form \eqref{ecuintro} with $f(x)=x$ and $\alpha=1$.
    In this case, $T^2+1$ has $2$ simple roots in $\F_9$ and $f(T)$ is a separable polynomial of degree $1$ with no common roots with $T^2+1$. Then $$|Split(\m{G}/G_0)|\geq 2,$$ by Proposition~\ref{phitheorem}. The set $S_0=\F_3 \subset \F_9$ satisfies conditions \eqref{itemifiniteram} to \eqref{itemiiifiniteram} of Theorem~\ref{teofiniteram}. Hence $$Ram(\m{G}/G_0) \subseteq \{P_\infty, P_{x_0}, P_{x_0-1}, P_{x_0-2}\},$$ then $\m{G}$ is an asymptotically good tower of Kummer type over $\F_9$ with
    $$\lambda(\m{G})\geq 2.$$
    Since $$2\geq A(9)\geq \lambda(\m{G})\geq 2,$$ we see that this tower over $\F_9$ is asymptotically optimal, i.e., $\lambda(\m{G})=A(9)$.
  \end{ex}

  \begin{rem}
    Note that the tower $\m{G}$ in the previous example has, in fact, finite ramification locus over $\F_3$. However, Theorem~\ref{teofiniteram} only allow us to say that $\m{F}$ has positive splitting over $\F_9$. Notice that $\m{G}$ can be described also by $$y^2=\frac{(x+1)^2}{4x}.$$ By \cite[Remark 5.9]{GSR03} we have that $\m{G}$ is a subtower of $$y^2=\frac{x^2+1}{2x},$$ which is optimal over $\F_{p^2}$.
  \end{rem}

   \begin{ex}\label{ejemtorrebuenaf9}
    Let $m=2$, $q=9$. Let $\m{H}=(H_0, H_1, \ldots)$ be defined recursively by $$y^2=\frac{x(x-1)}{x+1}.$$ This equation is of the form \eqref{ecuintro} with $f(x)=x+1$ and $\alpha=1$.
    Again in this case we have that $|Split(\m{H}/H_0)|\geq 2.$ The finite field $\F_9$ can be represented as $\F_9=\F_3(\delta)$ with $\delta^2+2\delta +2=0$. The set $S_0=\{0,1,2,\delta, \delta^3, \delta^5, \delta^7\}\subset \F_9$ satisfies conditions \eqref{itemifiniteram} to \eqref{itemiiifiniteram} of Theorem~\ref{teofiniteram}, and then $\m{H}$ is an asymptotically good Kummer type tower over $\F_9$ with
    $$\lambda(\m{H})\geq \frac 2 3.$$
  \end{ex}

\begin{rem}
    Notice that the tower $\m{H}$ in the previous example can be described also by $$y^2=\frac{x(x+2)}{x+1}.$$ Using this equation in \cite[Example 4.3]{GSR03} the authors proved that $\m{H}$ is an asymptotically good tower and the same bound for its limit was obtained.
  \end{rem}

  Theorem~\ref{teofiniteram} is stated for towers whose defining equations have coefficients in any finite field. Hence we can find examples of towers  whose defining equations have  coefficients in non-prime fields. In fact, if we perform a computer search for all possible equations of the type \eqref{ecufinitefam} satisfying the conditions of Theorem~\ref{teofiniteram} for $q = 9$ we obtain a long list of equations, and therefore of towers, which at first glance seem totally different from each other. In particular, we obtain some equation whose coefficients are purely in $\F_3$ while the vast majority has coefficients in $\F_9$. However, by a suitable change of variables, it can be shown that all of them are either equivalent to the tower in Example~\ref{ejemtorreoptimaf9} or to the tower in Example~\ref{ejemtorrebuenaf9}. That is, the towers in the above examples are the only two towers with defining equations of the type \eqref{ecufinitefam} and satisfying the conditions of Theorem~\ref{teofiniteram} with a finite set $S_0\subset \F_9$.

For example, other equations defining the asymptotically optimal Kummer tower of Example~\ref{ejemtorreoptimaf9} are given in Table~\ref{tablatorreoptima}.

%

  \begin{table}[ht]
\caption{Other examples of equations defining the same tower as in Example~\ref{ejemtorreoptimaf9}.\label{tablatorreoptima}}
{\begin{tabular}{cccc} 
\hline
$\alpha$ & $f(T)$ & defining equation & change of variables \\
 \hline
     &&&\\
    $\delta +1$ & $(\delta +2)T$ & $y^2=\frac{x^2-(\delta+1)(\delta +2)x+\delta +1}{(\delta +2) x}$ & $X=\delta x$; $Y=\delta y$ \\
    &&&\\
        $\delta +1$ & $(2\delta +1)T$ & $y^2=\frac{x^2-(\delta+1)(2\delta +1)x+\delta +1}{(2\delta +1) x}$ & $X=\delta^4 x$; $Y=\delta^4 y$ \\
        &&&\\
            $2$ & $(\delta +1)T$ & $y^2=\frac{x^2-2(\delta+1)x+2}{(\delta +1) x}$ & $X=\delta^3 x$; $Y=\delta^3 y$ \\
            &&&\\
                $2\delta +2$ & $2\delta T$ & $y^2=\frac{x^2-(2\delta +2)2\delta x+2\delta +2}{2\delta x}$ & $X=2x$; $Y=2y$ \\
                &&&\\
                $1$ & $2T$ & $y^2=\frac{x^2-2x+1}{2x}$ & $X=\delta^5 x$; $Y=\delta^5 y$ \\
                &&&\\
                 $2\delta +2$ & $\delta T$ & $y^2=\frac{x^2-(2\delta+2)\delta x+2\delta +2}{\delta x}$ & $X=\delta^2 x$; $Y=\delta^2 y$ \\
                  &&&\\
                  \hline
\end{tabular}}
\end{table}

  Given that all the equations in Table~\ref{tablatorreoptima} define the same tower and satisfy the conditions of Theorem~\ref{teofiniteram} we wonder in which cases different equations  satisfying these conditions will give us the same tower.   As a response to this question we have the following proposition.
  \begin{prop}\label{propomismas}
    Let $\alpha \in \F_q^*$ and $f(T)\in \F_q[T]$ be such that the equation \begin{equation}\label{ecurem1}
    y^m=\frac{x^m-\alpha f(x)+\alpha}{f(x)}
    \end{equation} defines a tower which satisfies the conditions of Theorem~\ref{teofiniteram}. Then, if for $c\in \F_q^*$ we consider $\beta=c^{-m}\alpha\in \F_q^*$ and $g(T)=f(cT)\in \F_q[T]$, we have that the equation $$y^m=\frac{x^m-\beta \,g(x)+\beta}{g(x)}$$ defines the same tower as the one defined by \eqref{ecurem1} and also satisfies Theorem~\ref{teofiniteram}.
  \end{prop}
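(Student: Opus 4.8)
The plan is to show that the change of variables $X = c^{-1}x$, $Y = c^{-1}y$ (equivalently $x = cX$, $y = cY$) transforms equation \eqref{ecurem1} into the claimed equation, so that the two equations define the same sequence of function fields, and then to verify directly that the new data $(\beta, g)$ satisfies the three hypotheses of Theorem~\ref{teofiniteram} with the \emph{same} finite set $S_0$ (scaled appropriately). First I would substitute: from $y^m = (x^m - \alpha f(x) + \alpha)/f(x)$ and $x = cX$, $y = cY$ we get $c^m Y^m = (c^m X^m - \alpha f(cX) + \alpha)/f(cX)$, hence $Y^m = (X^m - c^{-m}\alpha f(cX) + c^{-m}\alpha)/f(cX) = (X^m - \beta g(X) + \beta)/g(X)$ with $\beta = c^{-m}\alpha$ and $g(T) = f(cT)$. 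Since $c \in \F_q^*$, the map $x_i \mapsto c^{-1}x_i$ is an $\F_q$-isomorphism fixing $\F_q$, so it identifies the recursive sequence built from \eqref{ecurem1} with the one built from the new equation; in particular the towers are literally the same (isomorphic as towers over $\F_q$), which also shows the new equation still defines a tower.

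Next I would check that the hypotheses of Theorem~\ref{teofiniteram} carry over. Note $\deg g = \deg f = m - r$, and $g$ is separable iff $f$ is (the roots of $g$ are $c^{-1}$ times the roots of $f$, and $c \neq 0$). For the condition $Z_{T^m+\beta}\cap Z_g = \emptyset$: one computes $T^m + \beta = c^{-m}((cT)^m + \alpha)$, so $\zeta$ is a root of $T^m+\beta$ iff $c\zeta$ is a root of $T^m+\alpha$; and $\zeta$ is a root of $g$ iff $c\zeta$ is a root of $f$; since $Z_{T^m+\alpha}\cap Z_f = \emptyset$ by hypothesis, the disjointness is preserved. Moreover $T^m + \beta = c^{-m}((cT)^m+\alpha)$ splits into linear factors over $\F_q$ because $(cT)^m + \alpha$ does and $c \in \F_q$.

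Finally I would produce the finite set witnessing finite ramification for the new equation: take $S_0' = c^{-1}S_0 := \{c^{-1}\gamma : \gamma \in S_0\}$, which is finite and contained in $\overline{\F}_q$ (indeed in the same finite extension of $\F_q$ as $S_0$). One checks the three conditions. Condition~\ref{itemifiniteram}: $T^m - \beta g(T) + \beta = c^{-m}((cT)^m - \alpha f(cT) + \alpha)$, so $Z_{T^m - \beta g(T)+\beta} = c^{-1}Z_{T^m-\alpha f(T)+\alpha} \subseteq c^{-1}S_0 = S_0'$. Condition~\ref{itemiifiniteram}: $Z_g = c^{-1}Z_f \subseteq S_0'$. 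For condition~\ref{itemiiifiniteram}, given $\gamma' = c^{-1}\gamma \in S_0'$ with $\gamma \in S_0$, compute
\[
\sigma^{(\beta,g)}_{\gamma'}(T) = g(T)((\gamma')^m + \beta) - (T^m+\beta) = c^{-m}\Big(f(cT)(\gamma^m+\alpha) - ((cT)^m+\alpha)\Big) = c^{-m}\,\sigma^{(\alpha,f)}_{\gamma}(cT),
\]
so $Z_{\sigma^{(\beta,g)}_{\gamma'}} = c^{-1}Z_{\sigma^{(\alpha,f)}_{\gamma}} \subseteq c^{-1}S_0 = S_0'$ by hypothesis~\ref{itemiiifiniteram} for $(\alpha,f)$. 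Thus all hypotheses of Theorem~\ref{teofiniteram} hold for $(\beta,g)$ with the set $S_0'$, and $|S_0'| = |S_0|$, so the conclusion follows. The only mildly delicate point — and the one I would be most careful about — is keeping straight which scaling ($c$ versus $c^{-1}$) appears where in each of the three polynomial identities; these are all routine substitutions, but an off-by-inverse slip would break the argument, so I would write each identity out explicitly as above.
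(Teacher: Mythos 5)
Your proposal is correct and follows essentially the same route as the paper's proof: the substitution $x=cX$, $y=cY$ to identify the two defining equations, the observation that $Z_{T^m+\beta}\cap Z_g=\emptyset$ is inherited from $Z_{T^m+\alpha}\cap Z_f=\emptyset$, and the verification of the three conditions of Theorem~\ref{teofiniteram} with the scaled set $c^{-1}S_0$ (the paper's $S_0^g$). Your three polynomial identities match the paper's computations exactly, and the extra remarks on separability of $g$ and the splitting of $T^m+\beta$ are harmless additions.
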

\begin{proof}
    By applying to \eqref{ecurem1} the change of variables $x=cX$, $y=cY$ we get
    $$c^mY^m=\frac{c^mX^m-\alpha f(cX)-\alpha}{f(cX)}.$$ Thus $$Y^m=\frac{X^m-c^{-m}\alpha\, g(X)+c^{-m}\alpha}{g(X)}=\frac{X^m-\beta\, g(X)+\beta}{g(X)}$$ defines the same tower.

    Moreover, since $Z_{T^m+\alpha} \cap Z_f =\emptyset$ then $Z_{T^m+\beta} \cap Z_g =\emptyset$. Otherwise, if $x \in Z_{T^m+\beta} \cap Z_g$ then $x^m+\beta=0$ and $g(x)=0$. This means that $x^m+c^{-m}\alpha=0$ and $f(cx)=0$, and since $c\in \F_q^*$ we get $(cx)^m + \alpha =0$. But in this case $cx \in Z_{T^m+\alpha} \cap Z_f$ which is a contradiction.

    Let $S_0$ be the set of Theorem~\ref{teofiniteram} for $f$. We define $S_0^g=\{c^{-1}\lambda:\lambda \in S_0\} \subset \F_q$. Then $S_0^g$ satisfies:
    \begin{enumerate}[1.]
      \item $\quad$

      \vspace*{-1.18 cm} \begin{align*}
        Z_{T^m-\beta\,g(T)+\beta}&=\{x: x^m-\beta\,g(x)+\beta=0\}\\
         &= \{x:x^m-c^{-m}\alpha f(cx)+c^{-m}\alpha=0\} \\
        &= \{c^{-1}(cx):(cx)^m-\alpha f(cx)+\alpha=0\} \\
        &\subset \{c^{-1}\lambda: \lambda \in S_0\}=S_0^g.
      \end{align*}
      \item $Z_g=\{x: g(x)=0\}=\{c^{-1}(cx): f(cx)=0\} \subset \{c^{-1}\lambda: \lambda \in S_0\}=S_0^g$.
      \item For all $\gamma \in S_0^g$ we have that $\gamma=c^{-1}\delta$ with $\delta \in S_0$ and \begin{align*}
        Z_{\sigma_{\gamma}}&=\{x:g(x)(\gamma^m+\beta)-(x^m+\beta)=0\}\\
        &= \{x:f(cx)((c^{-1}\delta)^m+\alpha)-((cx)^m+\alpha)=0\} \\
        &= \{c^{-1}\lambda : f(\lambda)(\delta^m+\alpha)-(\lambda^m+\alpha)=0\}\\
        &= \{c^{-1}\lambda : \sigma_{\delta}(\lambda)=0\}\\
        &\subset \{c^{-1}\lambda: \lambda \in S_0\} =S_0^g.
      \end{align*}
    \end{enumerate}
    Therefore, for each element in $\F_q^\ast$ we have an equation which defines the same tower as \eqref{ecurem1} and satisfies the conditions of Theorem~\ref{teofiniteram}.
\end{proof}

  For those cases where $c^m=1$, we have the following direct consequence of the above proposition.

  \begin{cor}\label{coropropomismas}
     For each $m$-th root $c$ of $1$ in $\F_q$, we have that the equation $$y^m=\frac{x^m-\alpha \,g(x)+\alpha}{g(x)},$$ with $g(T)=f(cT)\in \F_q[T]$, defines the same tower as \eqref{ecurem1} and satisfies the conditions of Theorem~\ref{teofiniteram}.
  \end{cor}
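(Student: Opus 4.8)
The plan is to observe that this is an immediate specialization of Proposition~\ref{propomismas}. In the notation of that proposition, set $c$ to be an $m$-th root of unity in $\F_q^*$; this is legitimate since $q \equiv 1 \bmod m$ guarantees that $\F_q$ contains all $m$-th roots of unity, and in any case the statement is vacuous if no such $c \neq 1$ exists. Then $\beta = c^{-m}\alpha = (c^m)^{-1}\alpha = \alpha$, so the polynomial identity in Proposition~\ref{propomismas} reads
\[
y^m = \frac{x^m - \beta\, g(x) + \beta}{g(x)} = \frac{x^m - \alpha\, g(x) + \alpha}{g(x)},
\]
with $g(T) = f(cT)$, which is exactly the equation in the statement of the corollary.

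The two assertions to be transferred are: (i) that this equation defines the same tower as \eqref{ecurem1}, and (ii) that it satisfies the conditions of Theorem~\ref{teofiniteram}. Both follow verbatim from Proposition~\ref{propomismas} once we substitute $c^m = 1$: part (i) is the change-of-variables computation $x = cX$, $y = cY$ already carried out there, and part (ii) is the verification that $Z_{T^m+\alpha}\cap Z_g = \emptyset$ together with the three containment conditions for the set $S_0^g = \{c^{-1}\lambda : \lambda \in S_0\}$, all of which were established in that proof without any extra hypothesis on $c$ beyond $c\in\F_q^*$.

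There is essentially no obstacle here; the only point worth a line of justification is that when $c^m=1$ the modified constant $\beta$ collapses back to $\alpha$, so that the new defining equation has the same shape \eqref{ecurem1} (same $\alpha$, new $f$) rather than merely being of the general form in Proposition~\ref{propomismas}. I would simply write: by Proposition~\ref{propomismas} with this choice of $c$, noting $\beta = c^{-m}\alpha = \alpha$, the equation $y^m = (x^m - \alpha g(x) + \alpha)/g(x)$ with $g(T) = f(cT)$ defines the same tower as \eqref{ecurem1} and satisfies the conditions of Theorem~\ref{teofiniteram}. $\qed$
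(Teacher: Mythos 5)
Your proposal is correct and is exactly the argument the paper intends: the corollary is stated as a direct consequence of Proposition~\ref{propomismas}, obtained by specializing to $c$ with $c^m=1$ so that $\beta=c^{-m}\alpha=\alpha$. Your added remark that the modified constant collapses back to $\alpha$ is the one point worth making explicit, and you make it.
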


   Proposition~\ref{propomismas} has an important computacional consequence. Namely when making a computer search for all possible equations that define towers satisfying the conditions of  Theorem~\ref{teofiniteram} over $\F_q$, we will actually find $q-1$ equations representing the same tower. Moreover, the above corollary tell us that for every $\alpha$ there are as many equations that define the same tower as $m$-th roots of $1$ in $\F_q$.

  Let us now look at some other examples of towers whose defining equations have coefficients in non-prime fields. We consider first the case $m=2$ and $q=25$.

  \begin{ex}~\label{ejem25}
    Let us represent the finite field $\F_{25}$ as $\F_5(\delta)$ with $\delta^2+4\delta +2=0$. Consider the sequence $\m{K}=(K_0, K_1, \ldots)$ of function fields over $\F_{25}$ defined recursively by the equation $$y^2=\frac{x^2-(\delta+2) x}{(\delta +2) x+1}.$$ We have that $\F_{25}$ is a splitting field for $T^2+4$ and it it easy to check that $S_0=\{0,2\delta+4, 4\delta+3, \delta+2, 3\delta+1\}$ satisfies the conditions of Theorem~\ref{teofiniteram}. Then $\m{K}$ is a tame Kummer type tower over $\F_{25}$ with $$|Split(\m{K}/K_0)|\geq 2,$$ and $$|Ram(\m{K}/K_0)|\leq 5.$$
    Therefore, by Theorem~\ref{teofiniteram} we have that $$\lambda(\m{K})\geq \frac{2\cdot 2}{5-1}=1.$$
  \end{ex}

\begin{rem}
By using a suitable change of variables it can be shown that the tower $\m{K}$ can also by defined by the equation $$y^2=\frac{x(x+2)}{x+1},$$ which was studied by A. Garcia, H. Stichtenoth and H. R{\"u}ck in \cite{GSR03}, where it is also shown that its limit is at least 1.
\end{rem}

\begin{rem}
  Again in this case, making a computer search for all posible equations over $\F_{25}$ defining towers satisfying Theorem~\ref{teofiniteram}, we find $24$ different equations, but all of them represent the tower $\m{K}$ of Example~\ref{ejem25}. There is no other tower of this type with a finite set $S_0\subset \F_{25}$.
\end{rem}

Now we show new examples of asymptotically good Kummer type towers over $\F_9$.
  \begin{ex}\label{ejem81}
    Let us represent the finite field $\F_{81}$ as $\F_3(\delta)$ with $\delta^4+2\delta^3 +2=0$. When looking for all possible equations $$y^2=\frac{x^2-\alpha f(x) +\alpha}{f(x)},$$ defining towers of function fields over $\F_{81}$  with, for example, $\alpha=2\delta^3+2\delta^2+1$, we arrive to $8$ different possible candidates for $f(T)$. But since $\F_{81}$ has two $2-$th roots of unity, Corollary~\ref{coropropomismas} tell us that only $4$ of these equations represent different towers. Two of them are the towers of Examples \ref{ejemtorreoptimaf9} and \ref{ejemtorrebuenaf9}, and we find two more new towers:
     $$\m{I}=(I_0, I_1, \ldots)\qquad \text{with }\qquad f(T)=(2\delta^3+2\delta^2+2)T+(\delta^3+\delta^2+2),$$ and $$\m{J}=(J_0, J_1, \ldots)\qquad \text{with }\qquad f(T)=(\delta^3+\delta^2)T+(2\delta^3+2\delta^2).$$
      In both cases we find a finite set $S_0$ with $9$ elements and by Theorem~\ref{teofiniteram} we have that $$\lambda(\m{I})\geq \frac{2\cdot 2}{9-1}=\frac 1 2 \quad \text{ and } \quad \lambda(\m{J})\geq \frac{2\cdot 2}{9-1}=\frac 1 2 .$$
  \end{ex}

  \begin{rem}
 Again as before, if we look (computationally) for all possible equations of the type \eqref{ecufinitefam} satisfying the conditions of Theorem~\ref{teofiniteram} for $q = 81$ we obtain a long list of candidates. Interestingly in this case, there are no other equations representing the towers $\m{I}$ or $\m{J}$ with coefficients in $\F_3$. Moreover, it is easy to check that the coefficients in both equations are actually in $\F_9$. However the corresponding sets $S_0$ are in $\F_{81}$ and not in $\F_9$. Since the genus of a tower does not change in constant field extensions and recalling that the towers $\m{I}$ and $\m{J}$ both have non-empty splitting locus in $\F_9$, we see that in fact, they are asymptotically good towers over $\F_{9}$, each one with limit at least $1/2$. From this and the list of asymptotically good tame towers over $\F_9$ given in \cite{MaWu05}, we can say that $\m{I}$ and $\m{J}$ are new examples.
  \end{rem}

As we mentioned in the Introduction, in this paper we have worked with towers defined recursively by equation of the form \eqref{ecuintro} because they have non empty splitting locus under the conditions of Proposition \ref{phitheorem} which are easy to check. Another equation in which is already known that the splitting locus is non-empty is \begin{equation}\label{ecuintro2}
y^m=x^{m-r} f(x),
\end{equation} where $f \in \F_q[x]$ is a suitable polynomial of degree $r$ with $f(0)\neq 0$ and $\gcd(m,r)=1$. In \cite{GST97}, Garcia, Stichtenoth and Thomas studied towers defined recursively by \eqref{ecuintro2}, giving conditions in order to have finite ramification locus. Interestingly, and somehow surprisingly, when performing a computational search for this type of equations, the only examples that appeared are the so-called Fermat type towers (see \cite{GSR03}). So we are tempted to conjecture that these are the only ones of the form \eqref{ecuintro2} which are asymptotically good. Recall that Lenstra \cite{Lenstra02} proved that over a prime fields, for equations of the form \eqref{ecuintro2} there is not a finite set $S_0 \subset \overline{\F}_p$ containing the ramifications locus of the tower.

We end with the following observation. Making the change of variables $X=1/x$ and $Y=1/y$ in \eqref{ecuintro2}, we obtain the equation $$y^m=\frac{x^m}{h(x)},$$ with $h \in \F_q[x]$. In particular, for $q=9$ and $h(x)=x-1$ we have a tower recursively defined by $$y^2=\frac{x^2}{x-1},$$ which is asymptotically optimal (see \cite[Example 14.9]{GSR03}). However this example is not new as claimed in \cite{GSR03}. It is, in fact, a Fermat type tower of the form \eqref{ecuintro2} given by $$y^2=x(x-1),$$ over $\F_9$. 

\end{document}